\newdimen\AAdi%
\newbox\AAbo%
\def\AAk#1#2{\s_etbox\AAbo=\hbox{#2}\AAdi=\wd\AAbo\kern#1\AAdi{}}%
\def\AAr#1#2#3{\s_etbox\AAbo=\hbox{#2}\AAdi=\ht\AAbo\raise#1\AAdi\hbox{#3}}%
\font\tenmsb=msbm10 at 12pt \font\sevenmsb=msbm7 at 8pt
\font\fivemsb=msbm5 at 6pt
\def\Bbb#1{{\tenmsb\fam\msbfam#1}}
\newtheorem{thm}{Theorem}[section]
\newtheorem{rem}{Remark}[section]
\newtheorem{pro}{Proposition}[section]
\newcommand{\ba}{\begin{array}}
\newcommand{\ea}{\end{array}}
\newcommand{\Section}[2]{\setcounter{equation}{0}
\allowdisplaybreaks
\section[#1]{#2}}
\def\n{\nabla}
\def\f#1#2{\frac{#1}{#2}}
\def\pd#1#2{\frac {\partial #1}{\partial #2}}
\def\td{\tilde}
\def\a{\alpha}
\def\be{\beta}
\def\p#1{\partial #1}
\def\de{\delta}
\def\De{\Delta}
\def\ep{\varepsilon}
\def\G{\Gamma}
\def\g{\gamma}
\def\la{\lambda}
\def\si{\sigma}
\def\Si{\Sigma}
\def\C{\Bbb{C}}
\def\lan{\langle}
\def\ran{\rangle}
\def\ra{\rightarrow}
\def\sw{\textswab}
\def\pz{\f{\p}{\p z}}
\def\pw{\f{\p}{\p w}}
\def\pdw{\f{\p}{\p \bar{w}}}
\def\ze{\zeta}
\def\Dirac{D\hskip -2.9mm \slash\ }
\def\dirac{\partial\hskip -2.6mm \slash\ }
\begin{document}
\title
[A structure theorem of Dirac-harmonic maps ] {A structure theorem of Dirac-harmonic maps between spheres}

\author
[Ling Yang]{Ling Yang}
\address
{Institute of Mathematics, Fudan University, Shanghai 200433, China
and Key Laboratory of Mathematics for Nonlinear Sciences (Fudan
University), Ministry of Education} \email{051018016@fudan.edu.cn}

\begin{abstract}

For an arbitrary Dirac-harmonic map $(\phi,\psi)$ between compact oriented Riemannian surfaces, we shall study the zeros
of $|\psi|$. With the aid of Bochner-type formulas, we explore the relationship between the order of the zeros
of $|\psi|$ and the genus of $M$ and $N$. On the basis, we could clarify all of nontrivial Dirac-harmonic maps from $S^2$ to $S^2$.

\end{abstract}

\renewcommand{\subjclassname}{%
  \textup{2000} Mathematics Subject Classification}
\subjclass{58E20, 53C27.}
\date{}
\maketitle

\Section{Introduction}{Introduction}

Let $(M,h)$ be an $m$-dimensional Riemannian spin manifold; $Spin M$
denotes the Spin-bundle on $M$, and $\eta:Spin M\ra SO M$ is the bundle map,
where $SO M$ denotes the tangent orthonormal frame bundle on $M$.
Denote by $\Si M$ the spinor bundle associated to
$Spin M$, i.e. $\Si M=Spin M\times_\rho \Si_m$, where $\rho:Spin_m\ra \Si_m$ is the standard
representation. On $\Si M$ we can choose an Hermitian product $\lan\ ,\ \ran$,
such that
\begin{equation}
\lan X\cdot \psi,\xi\ran=-\lan \psi,X\cdot \xi\ran\qquad X\in \G(TM), \psi,\xi\in \G(\Si M).
\end{equation}
Here
\begin{equation}
\sw{m}: X\otimes \psi\mapsto X\cdot \psi
\end{equation}
 is the Clifford multiplication. There is a connection on $\Si M$
induced by the Levi-Civita connection of $SOM$; denote it by $\n$; and it is well known that
$\n$ is compatible with $\lan\ ,\ \ran$.
Let $\phi$ be a smooth map from $M$ to another Riemannian manifold $(N,g)$ of dimension $n\geq 2$. Denote
by $\phi^{-1}TN$ the pull-back bundle of $TN$ and by $\Si M\otimes \phi^{-1}TN$ the twisted bundle.
 On it there is a metric
induced from those on $\Si M$ and $\phi^{-1}TN$. Similarly we have a natural connection
$\td{\n}$ on $\Si M\otimes \phi^{-1}TN$ induced from
those on $\Si M$ and $\phi^{-1}TN$. Based on it, we can define the Dirac operator along the map $\phi$ by
\begin{equation}
D\hskip -2.9mm \slash\ \psi=\sw{m}\circ \td{\n}\psi.
\end{equation}
Here $\psi$ is a smooth section of $\Si M\otimes \phi^{-1}TN$.

In \cite{c-j-l-w1}, Q. Chen, J. Jost, J. Li and G. Wang introduced a functional that couples the nonlinear sigma model with a spinor field:
\begin{equation}
L(\phi,\psi)=\int_M \big[|d\phi|^2+\lan \psi,D\hskip -2.9mm \slash\ \psi\ran\big]*1
\end{equation}
The critical points of the functional is called Dirac-harmonic maps. In the paper, some geometric and analytic aspects of
such maps were studied, especially a removable singularity theorem was established. Later in \cite{c-j-l-w2}, \cite{c-j-w}, \cite{z1} and \cite{z2},
another geometric and analytic properties of Dirac-harmonic maps were studied.

Obviously there are two type of trivial Dirac-harmonic maps. One is $(\phi,0)$, where $\phi$ is a harmonic map, and another
is $(y,\psi)$, where $y$ is a point in $N$ viewed as a constant map, $\psi$ is a harmonic spinor. In \cite{c-j-l-w1},
the authors constructed non-trivial Dirac-harmonic maps $(\phi,\psi)$ from $S^2$ to $S^2$, where $\phi$ is a (possible branched)
conformal map, $\psi$ could be written in the form
\begin{equation}
\psi=e_\a\cdot \Psi\otimes \phi_*(e_\a),
\end{equation}
$\{e_\a:\a=1,2\}$ is a local orthonormal frame field on $S^2$, and $\Psi$ is a twisor spinor. It
is natural to ask whether there exists another form of Dirac-harmonic maps from $S^2$ to $S^2$. And
furthermore, is there a Dirac-harmonic map $(\phi,\psi)$ such that $\phi$ is not a harmonic map?

In the theory of harmonic maps between two compact Riemannian surfaces, Bochner formulas of $\log|\p u|$
and $\log|\bar{\partial}u|$ play an important role ($u$ denotes a harmonic map). From it several interesting
formulas easily follow, which tell us the relationship between the order of the zeros of $\log|\p u|$
and $\log|\bar{\partial}u|$ and the genus of $M$ and $N$; and moreover we can obtain some
uniqueness theorems and non-existence theorems (see \cite{s-y} Chapter I). This phenomenon
motives us to study the zeros of $|\psi|$.

Now we give a brief outline of the paper. In Section \ref{s2},
the subjects we study are general Dirac-harmonic maps. In the viewpoint that $\Si M\otimes \phi^{-1}TN=\Si M\otimes (\phi^{-1}TN)^{\C}$,
$\psi$ could be written as $\psi=\psi^j\otimes W_j$, where $\{W_1,\cdots,W_n\}$ is a local complex tangent frame field on $N$;
and we derive the Euler-Lagrange equation of $L$ by using the above denotation. In Section \ref{s1}, we assume $M$ and $N$
to be oriented Riemannian surfaces; the equations of harmonic spinor $\psi$ along $\phi$ in the local complex coordinates
are derived, which imply that the zeros of $|\pi_1^+(\psi)|$ are isolated, unless $|\pi_1^+(\psi)|$
is identically zero, so are $|\pi_0^+(\psi)|$, $|\pi_1^-(\psi)|$ and $|\pi_0^-(\psi)|$. (Here the definition of
$\pi_1^+,\pi_0^+,\pi_1^-,\pi_0^-$ is introduced in Section \ref{s1}.) In Section \ref{s3}-\ref{s4}, under the further assumption that
$M$ and $N$ are both compact, we derive several Bochner formulas of $\log|\pi_1^+(\psi)|, \log|\pi_0^+(\psi)|,
\log|\pi_1^-(\psi)|, \log|\pi_0^-(\psi)|$ on the basis of Weitzenb\"{o}ck-type formulas
of $\psi$ and furthermore give the proof of main theorems as follows, including a structure theorem of Dirac-harmonic maps from
$S^2$ to $S^2$. (In the process, it is necessary to use the results
in Section \ref{s2}-\ref{s1}.)
\begin{thm}
$M$ and $N$ are both compact oriented Riemannian surfaces, and $(\phi,\psi)$ is a Dirac-harmonic map from $M$ to $N$.
If $g_M=0$ or $|g_M-1|<|\deg(\phi)||2g_N-2|$, then $\phi$ has to be a harmonic map.
\end{thm}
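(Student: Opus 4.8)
The plan is to reduce the theorem to showing that $\psi$ degenerates --- in the precise sense that one entire ``row'' or ``column'' of the array $\{\pi_1^+(\psi),\pi_0^+(\psi),\pi_1^-(\psi),\pi_0^-(\psi)\}$ (rows indexed by chirality, columns by the splitting $\phi^{-1}TN\otimes\mathbb C=\phi^{-1}T^{1,0}N\oplus\phi^{-1}T^{0,1}N$) vanishes identically --- and then to force that degeneration out of the topology of $M$ and $N$. By the Euler--Lagrange equations of Section \ref{s2}, a Dirac-harmonic map satisfies $\Dirac\psi=0$ together with $\tau(\phi)=\mathcal R(\phi,\psi)$, so $\phi$ is harmonic if and only if the curvature term $\mathcal R(\phi,\psi)$ vanishes identically. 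The first step, purely local and algebraic, is that because $N$ is a surface, $R^N_{ijkl}=K_N(g_{ik}g_{jl}-g_{il}g_{jk})$, and this makes $\mathcal R(\phi,\psi)$ collapse in two complementary ways. On the one hand $\mathcal R$ is bilinear in $\psi$ and, after expansion, each of its terms pairs a $\Sigma^+M$-piece of $\psi$ with a $\Sigma^-M$-piece --- Clifford multiplication by a tangent vector of $M$ reverses chirality, and $\Sigma^+M\perp\Sigma^-M$ with respect to $\langle\,,\,\rangle$ --- so $\mathcal R\equiv0$ once $\pi^+(\psi)\equiv0$ or $\pi^-(\psi)\equiv0$ (a row vanishes). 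On the other hand, in a unitary coframe of $N$ adapted to its complex structure the metric is anti-diagonal, and the antisymmetries of $R^N$ then force the $\phi^{-1}T^{1,0}N$-component of $\mathcal R$ to be a sum of terms each carrying a factor $\psi^1$ (the $\phi^{-1}T^{1,0}N$-component of $\psi$) and, symmetrically, the $\phi^{-1}T^{0,1}N$-component of $\mathcal R$ a sum of terms each carrying a factor $\psi^{\bar 1}$; since $\mathcal R$ is a real section of $\phi^{-1}TN$ its two components vanish together, so $\mathcal R\equiv0$ once $\psi^1\equiv0$ or $\psi^{\bar 1}\equiv0$ (a column vanishes). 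Hence it suffices to make a whole row or a whole column of the array vanish.

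For the second step I use Sections \ref{s1}--\ref{s4}. Each component is a smooth section of a complex line bundle over $M$ --- $\pi_1^{\pm}(\psi)$ of $\Sigma^{\pm}M\otimes\phi^{-1}T^{1,0}N$ and $\pi_0^{\pm}(\psi)$ of $\Sigma^{\pm}M\otimes\phi^{-1}T^{0,1}N$ --- and, by the equations in complex coordinates of Section \ref{s1}, is either identically zero or has only isolated zeros, of finite positive order; integrating the Bochner formula for $\log|\pi_{\bullet}^{\bullet}(\psi)|$ over $M$ and using Gauss--Bonnet then identifies, in the non-trivial case, the total order of its zeros with the degree of the ambient line bundle, which is therefore $\geq0$. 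Since $\deg\Sigma^{\pm}M=\pm(g_M-1)$, $\deg\phi^{-1}T^{1,0}N=\deg(\phi)(2-2g_N)$ and $\deg\phi^{-1}T^{0,1}N=\deg(\phi)(2g_N-2)$, the degrees of the bundles carrying $\pi_1^+(\psi)$ and $\pi_0^-(\psi)$ are negatives of one another, and so are those carrying $\pi_0^+(\psi)$ and $\pi_1^-(\psi)$:
\begin{gather*}
\deg\!\big(\Sigma^+M\otimes\phi^{-1}T^{1,0}N\big)+\deg\!\big(\Sigma^-M\otimes\phi^{-1}T^{0,1}N\big)=0,\\
\deg\!\big(\Sigma^+M\otimes\phi^{-1}T^{0,1}N\big)+\deg\!\big(\Sigma^-M\otimes\phi^{-1}T^{1,0}N\big)=0.
\end{gather*}

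Now I bring in the hypothesis. If $g_M=0$, then $\deg(\Sigma^+M\otimes\phi^{-1}T^{1,0}N)=-1+\deg(\phi)(2-2g_N)$ is odd, hence nonzero; if instead $|g_M-1|<|\deg(\phi)|\,|2g_N-2|$, then $g_M-1\neq\pm\deg(\phi)(2g_N-2)$, so both $\deg(\Sigma^+M\otimes\phi^{-1}T^{1,0}N)$ and $\deg(\Sigma^+M\otimes\phi^{-1}T^{0,1}N)$ are nonzero. In each of the two pairs above the two degrees sum to $0$ and are not both zero, so exactly one of them is negative, and the component it carries must be identically zero. Thus $\pi_1^+(\psi)\equiv0$ or $\pi_0^-(\psi)\equiv0$, and $\pi_0^+(\psi)\equiv0$ or $\pi_1^-(\psi)\equiv0$; running through the four combinations, in every one of them an entire row or an entire column of the array is identically zero. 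By the first step $\mathcal R(\phi,\psi)\equiv0$, i.e.\ $\tau(\phi)=0$ and $\phi$ is a harmonic map.

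The main obstacle is the analytic input of the second step: for a coupled first-order elliptic system (rather than a single $\bar\partial$-equation) one must show that each non-trivial component has only isolated zeros of finite order and that the sum of these orders equals the degree of the ambient line bundle --- this is exactly what the complex-coordinate form of $\Dirac\psi=0$ from Section \ref{s1} (for the local structure of the zeros) together with the Weitzenb\"ock/Bochner formulas of Sections \ref{s3}--\ref{s4} (for the degree identity) are meant to supply. The first step, though more elementary, is the structural heart of the argument: one must check carefully, in frames adapted to the complex structures of $M$ and $N$, that the curvature term degenerates in exactly the two ways claimed whenever a row or a column of the array is absent.
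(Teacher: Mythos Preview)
Your argument has a genuine gap in the second step, and the error then makes the first step inapplicable.

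The problem is the claim that the total order of the zeros of each component equals the degree of the ambient line bundle. Look at the four Bochner formulas: all of them carry $+\tfrac12 K_M$ on the right (not $\pm\tfrac12 K_M$), so after integration and Gauss--Bonnet one gets $g_M-1$ in every case. Concretely (this is Theorem~\ref{t1}),
\[
\sum n_p^+ \;=\; \sum m_p^- \;=\; g_M-1-\deg(\phi)(2g_N-2),\qquad
\sum m_p^+ \;=\; \sum n_p^- \;=\; g_M-1+\deg(\phi)(2g_N-2).
\]
Your degree count is off because on the $\Si^-M$ side the Dirac equation is of $\p$-type rather than $\bar\p$-type, so $\pi_1^-(\psi)$ and $\pi_0^-(\psi)$ are \emph{anti}-holomorphic sections and their zero count equals the degree of the \emph{conjugate} bundle, i.e.\ minus the degree you wrote down. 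Hence $\pi_1^+(\psi)$ and $\pi_0^-(\psi)$ have the \emph{same} order-sum formula (not opposite ones), and likewise $\pi_0^+(\psi)$ and $\pi_1^-(\psi)$. Under the hypothesis at least one of $g_M-1\pm\deg(\phi)(2g_N-2)$ is negative, so what actually vanishes is a \emph{diagonal} of your array --- either $\pi_1^+(\psi)=\pi_0^-(\psi)=0$ or $\pi_0^+(\psi)=\pi_1^-(\psi)=0$ --- never a full row or a full column.

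This breaks the link to your first step; worse, your ``column'' claim is false as stated. If $\psi^1\equiv 0$, so that $\psi=\psi^2\otimes\pdw$ with $\psi^2=c\,\psi^++d\,\psi^-$ of mixed chirality, then only the $j=k=2$ term of (\ref{eq4}) survives and
\[
\tau(\phi)=-\lan\psi^2,e_\a\cdot\psi^2\ran\,R^N_{\pw,\pdw}\phi_*(e_\a);
\]
one computes $\lan\psi^2,e_1\cdot\psi^2\ran=\bar d c-\bar c d$, which is purely imaginary but generically nonzero, so $\tau(\phi)$ need not vanish. The paper's argument handles the diagonal case by using both of your mechanisms simultaneously: the cross terms ($j\neq k$) carry $R^N_{\pw,\pw}$ or $R^N_{\pdw,\pdw}$ and die by antisymmetry of $R^N$, while in the diagonal-vanishing situation each surviving $\psi^j$ has \emph{pure} chirality, so $\lan\psi^j,e_\a\cdot\psi^j\ran=0$ as well.
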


\begin{thm}
If $M=N=S^2$ equipped with arbitrary metric, $(\phi,\psi)$ is a nontrivial Dirac-harmonic map from $M$ to $N$, then
$\phi$ has to be holomorphic or anti-holomorphic, $\psi$ could be written in the form
\begin{equation}
\psi=e_\a\cdot \Psi\otimes \phi_*(e_\a),
\end{equation}
where $\Psi$ is a twistor spinor (possibly with isolated singularities).
\end{thm}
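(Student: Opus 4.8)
The plan is to derive the statement from Theorem 1.1 together with the local results of Section \ref{s1} and the Bochner formulas of Sections \ref{s3}--\ref{s4}. Since $M=S^2$ has $g_M=0$, Theorem 1.1 applies at once and $\phi$ is a harmonic map. A harmonic map from $S^2$ into an oriented Riemannian surface has holomorphic Hopf differential, which must vanish identically because $S^2$ carries no nonzero holomorphic quadratic differential; hence $\phi$ is weakly conformal, and since the zeros of $\partial\phi$ and $\bar\partial\phi$ are isolated, $\phi$ is holomorphic or anti-holomorphic (cf. \cite{s-y}). I shall treat the holomorphic case; the anti-holomorphic one follows by reversing the orientation of $N$, which interchanges $\phi^{-1}T^{1,0}N$ with $\phi^{-1}T^{0,1}N$ and so exchanges the roles of $\pi_1^{\pm}$ and $\pi_0^{\pm}$. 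If $\phi$ were constant, then $\psi$ would be a harmonic spinor on $S^2$ twisted by a flat bundle, forcing $\psi\equiv 0$ and $(\phi,\psi)$ trivial; hence nontriviality gives $d:=|\deg(\phi)|\geq 1$.

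Next I invoke the remaining Dirac-harmonic equation $\Dirac\psi=0$ and the decomposition $\psi=\pi_1^+(\psi)+\pi_0^+(\psi)+\pi_1^-(\psi)+\pi_0^-(\psi)$ of Section \ref{s1}. Each piece is a section of a complex line bundle over $S^2$ assembled from the half-spinor bundles $\Si^{\pm}M$ and from $\phi^{-1}T^{1,0}N$ or $\phi^{-1}T^{0,1}N$, and the local equations of Section \ref{s1} show that, in suitable frames, $\pi_1^+(\psi)$ and $\pi_0^+(\psi)$ are holomorphic sections and $\pi_1^-(\psi),\pi_0^-(\psi)$ anti-holomorphic sections of these bundles, each with only isolated zeros unless it vanishes identically. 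Now apply the Bochner formulas of Sections \ref{s3}--\ref{s4} to $\log|\pi_{\bullet}^{\bullet}(\psi)|$ and integrate over the closed surface $S^2$: for a piece that is not identically zero the integral of the left-hand side equals $2\pi$ times the (nonnegative) total order of its zeros, while the integral of the right-hand side equals $2\pi$ times the degree of the corresponding line bundle. Since $|\deg\Si^{\pm}M|=|g_M-1|=1$ while $\deg\phi^{-1}T^{1,0}N=-\deg\phi^{-1}T^{0,1}N=2d\geq 2$, exactly two of the four line bundles --- those carrying $\pi_0^+(\psi)$ and (after conjugation) $\pi_1^-(\psi)$ --- have negative degree; a nonnegative integer cannot equal a negative one, so $\pi_0^+(\psi)\equiv\pi_1^-(\psi)\equiv 0$. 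Hence $\psi=\pi_1^+(\psi)+\pi_0^-(\psi)$, with $\pi_1^+(\psi)$ a holomorphic section of $\Si^+M\otimes\phi^{-1}T^{1,0}N$ and $\pi_0^-(\psi)$ an anti-holomorphic section of $\Si^-M\otimes\phi^{-1}T^{0,1}N$.

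It remains to produce the twistor spinor. Fix a local holomorphic coordinate $z$ on $M$. Since $\bar\partial\phi=0$, a direct computation with Clifford multiplication shows that for every $\Psi=\Psi^++\Psi^-\in\G(\Si M)$ the spinor $e_{\a}\cdot\Psi\otimes\phi_*(e_{\a})$ has vanishing $\pi_0^+$ and $\pi_1^-$ parts, while, read in the holomorphic (resp. anti-holomorphic) local frames and up to a universal nonzero constant, its $\pi_1^+$ part is $\pd{\phi}{z}\,\Psi^-$ and its $\pi_0^-$ part is $\overline{\pd{\phi}{z}}\,\Psi^+$. I therefore define $\Psi^-$ by dividing the frame-coefficient of $\pi_1^+(\psi)$ by $\pd{\phi}{z}$, and $\Psi^+$ by dividing that of $\pi_0^-(\psi)$ by $\overline{\pd{\phi}{z}}$. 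Because $\pd{\phi}{z}$ is holomorphic and vanishes only at the finitely many branch points of $\phi$, the coefficients so obtained are holomorphic (resp. anti-holomorphic) with at worst poles at those branch points; equivalently, $\Psi:=\Psi^++\Psi^-$ is a spinor field with isolated singularities satisfying $\n^{0,1}\Psi^-=0$ and $\n^{1,0}\Psi^+=0$, which on a surface is precisely the twistor equation. By construction $e_{\a}\cdot\Psi\otimes\phi_*(e_{\a})=\psi$, which is the assertion.

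The logical skeleton is short, so the real work --- and the main obstacle --- lies in the bookkeeping of Sections \ref{s2}--\ref{s4}. One must fix the Clifford-module and Weitzenb\"ock conventions precisely enough to (i) identify the four line bundles, compute their degrees, and check that exactly the two attached to $\pi_0^+(\psi)$ and $\pi_1^-(\psi)$ are negative; and (ii) verify that the conformal factors hidden inside $\phi_*(e_{\a})$ and inside Clifford multiplication cancel, so that the frame-coefficient of $\pi_1^+(\psi)$ is genuinely a constant multiple of $\pd{\phi}{z}$ times a function holomorphic along $\phi$. Point (ii) is what makes the reconstructed $\Psi$ solve the twistor equation --- rather than merely possess a holomorphic and an anti-holomorphic half-spinor component --- and what forces its singularities to be isolated, namely poles of order at most the local branching order of $\phi$.
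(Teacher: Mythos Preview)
Your proposal is correct and follows essentially the same route as the paper: apply Theorem~1.1 to get $\phi$ harmonic, hence (anti-)holomorphic on $S^2$; use the zero-count formulas of Theorem~\ref{t1} (your degree computation is the same thing) to kill $\pi_0^+(\psi)$ and $\pi_1^-(\psi)$; then write the surviving pieces as $e_\a\cdot\Psi\otimes\phi_*(e_\a)$ via the Clifford identities~(\ref{eq2}). The only real differences are cosmetic: the paper dispatches the $\deg\phi=0$ case by the same Theorem~\ref{t1} (all four sums equal $-1$) rather than by Lichnerowicz, and it obtains the twistor property of $\Psi$ by citing \cite{c-j-l-w1}~Proposition~2.2 instead of your direct verification that $\n^{0,1}\Psi^-=0$ and $\n^{1,0}\Psi^+=0$ --- though the paper then carries out exactly that computation anyway when deriving the explicit form $\Psi=\bar u_1\la^{1/4}\psi^++u_2\la^{1/4}\psi^-$.
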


Please note that here and in the sequel we use the summation convention and agree the
range of indices:
$$1\leq i,j,k\leq n;\qquad 1\leq \a,\be\leq m.$$
We refer to \cite{l-m} and \cite{f} for more background material on
spin structures and Dirac operators.

\Section{Euler-Lagrange equations of Dirac-harmonic maps}{Euler-Lagrange equations of Dirac-harmonic maps}\label{s2}

Denote the complexification of $\phi^{-1}TN$ by $(\phi^{-1}TN)^{\C}$. Obviously
$\Si M\otimes \phi^{-1}TN\subset \Si M\otimes (\phi^{-1}TN)^{\C}$. On the other hand, for
any $\psi\in \Si M$, $X+\sqrt{-1}Y\in (\phi^{-1}TN)^\C$ (here $X,Y\in \phi^{-1}TN$),
$$\psi\otimes (X+\sqrt{-1}Y)=\psi\otimes X+\sqrt{-1}\psi\otimes Y\in\Si M\otimes \phi^{-1}TN;$$
which implies $\Si M\otimes (\phi^{-1}TN)^{\C}\subset \Si M\otimes \phi^{-1}TN$.
Hence $\Si M\otimes (\phi^{-1}TN)^{\C}=\Si M\otimes \phi^{-1}TN$. The pull-back metric $\phi^{-1}g$ on $\phi^{-1}TN$
could be naturally extended to a Hermitian
product on $(\phi^{-1}TN)^\C$; and there is a natural Hermitian
product on $\Si M\otimes (\phi^{-1}TN)^{\C}$
 induced from those on $\Si M$ and $(\phi^{-1}TN)^{\C}$, which is also denoted by $\lan\ ,\ \ran$.

For each point $x\in M$, we can choose $\{W_i\in \G(TU):1\leq i\leq n\}$, where $U$ is a neighborhood of $\phi(x)$, such that
\begin{equation}
(T_{\phi(y)} N)^\C=\bigoplus_{i=1}^n \C W_i(\phi(y))\qquad y\in \phi^{-1}(U),
\end{equation}
then on $\phi^{-1}(U)$, $\psi$ could be expressed by
\begin{equation}
\psi(y)=\psi_j(y)\otimes W_i(\phi(y)),
\end{equation}
where $\psi_1,\cdots,\psi_n\in \G(\Si \big(\phi^{-1}(U)\big)$.
We shall derive the Euler-Lagrange equations for $L$ by using the above denotation.

\begin{pro}\label{p1}

Let $\{e_\a:1\leq \a\leq m\}$ be a local tangent orthonormal frame field,
Then the Euler-Lagrange equations for $L$ are
\begin{eqnarray}
D\hskip -2.9mm \slash\ \psi&=&0\label{eq1}\\
\tau(\phi)&=&-\lan \psi^j,e_\a\cdot \psi^k\ran R_{\overline{W}_j,W_k}^N \phi_*(e_\a).\label{eq4}
\end{eqnarray}
\end{pro}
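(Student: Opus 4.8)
The plan is to compute the first variation of the functional
$$L(\phi,\psi)=\int_M\big[|d\phi|^2+\lan\psi,\Dirac\psi\ran\big]*1$$
in two independent families of variations: one fixing $\phi$ and varying $\psi$, and one fixing $\psi$ and varying $\phi$. First I would treat the spinor variation. Let $\psi_t$ be a smooth path of sections of $\Si M\otimes\phi^{-1}TN$ with $\psi_0=\psi$ and $\dt{\psi_t}|_{t=0}=\xi$; since $\phi$ is fixed, only the quadratic term $\lan\psi,\Dirac\psi\ran$ contributes. Because $\td\n$ is compatible with the Hermitian metric and the Clifford multiplication is skew-Hermitian (the defining relation (1.1), which passes to the twisted bundle), $\Dirac$ is formally self-adjoint: $\int_M\lan\Dirac\psi,\xi\ran*1=\int_M\lan\psi,\Dirac\xi\ran*1$. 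Hence $\dt{}|_{t=0}\int_M\lan\psi_t,\Dirac\psi_t\ran*1=2\int_M\mathrm{Re}\lan\Dirac\psi,\xi\ran*1$, and since $\xi$ is an arbitrary compactly supported section, this vanishes for all $\xi$ iff $\Dirac\psi=0$, which is (\ref{eq1}).

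Next I would carry out the variation of $\phi$. Take a variation $\phi_t$ with variation field $V=\dt{\phi_t}|_{t=0}\in\G(\phi^{-1}TN)$, and parallel-transport $\psi$ along the variation so that $\psi$ itself does not appear to vary (the standard device in this setting); then $\dt{}|_{t=0}\int_M|d\phi_t|^2*1=-2\int_M\lan\tau(\phi),V\ran*1$ is the classical harmonic-map first variation, where $\tau(\phi)$ is the tension field. For the Dirac term, write $\psi=\psi^j\otimes W_j$ in a local frame as in the excerpt; the $t$-derivative of $\lan\psi,\Dirac\psi\ran$ now comes entirely from the $t$-dependence of the connection $\td\n$ on $\phi^{-1}TN$, i.e. from differentiating the pulled-back Levi-Civita connection. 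The key computation is that $\dt{}\td\n_{e_\a}=\nabla^N_V$ applied in the bundle direction produces, after the usual commutator manipulation and using the symmetries of the curvature tensor $R^N$, a term of the form $\lan\psi^j,e_\a\cdot\psi^k\ran R^N_{\overline W_j,W_k}\phi_*(e_\a)$ paired against $V$. I would organize this by computing $\dt{}\big(\td\n_{e_\a}\psi\big)$, Clifford-multiplying by $e_\a$, taking the Hermitian inner product with $\psi$, integrating by parts once to move a derivative off $V$, and collecting the curvature contribution; care with the complex frame $W_j$ versus $\overline W_j$ and with the reality of the resulting expression is needed so that the Euler--Lagrange equation is the real equation (\ref{eq4}).

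Setting the total first variation $-2\int_M\lan\tau(\phi),V\ran*1 + \int_M\lan(\text{curvature term}),V\ran*1$ to zero for all $V$ then yields (\ref{eq4}) (the factor of $2$ being absorbed into the normalization of the right-hand side, or tracked explicitly). The main obstacle I anticipate is the bookkeeping in the $\phi$-variation of the Dirac term: one must correctly identify how the variation of the bundle connection $\td\n$ enters $\Dirac\psi=\sw{m}\circ\td\n\psi$, handle the fact that $\psi=\psi^j\otimes W_i$ is written in a frame that is itself $\phi$-dependent (so the "$\psi$ does not vary" convention must be stated carefully — it is the section of the abstract twisted bundle, transported, that is held fixed), and use the first Bianchi identity together with the skew-symmetry of $R^N$ in its first two slots and of Clifford multiplication to land precisely on $\lan\psi^j,e_\a\cdot\psi^k\ran R^N_{\overline W_j,W_k}\phi_*(e_\a)$ rather than some rearrangement of it. Everything else — the self-adjointness of $\Dirac$ and the classical tension-field variation — is standard.
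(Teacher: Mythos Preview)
Your plan is correct and follows the same two-stage scheme as the paper: vary $\psi$ alone and use self-adjointness of $\Dirac$ to obtain $\Dirac\psi=0$, then vary $\phi$ and extract the curvature term. The one point of divergence is the device used in the $\phi$-variation. The paper does \emph{not} parallel-transport $\psi$; it sets $\psi_t(y)=\psi^j(y)\otimes W_j(\phi_t(y))$, freezing the local spinor components in the chosen frame. This extension is not covariantly constant in $t$, so both a term $\lan \f{d\psi_t}{dt},\Dirac\psi\ran$ and a term $\Dirac(\psi^k\otimes \n_\xi W_k)$ appear; the paper kills them by invoking the already-derived equation $\Dirac\psi=0$ together with self-adjointness. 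Your parallel-transport convention would also work, and is arguably cleaner, but then the curvature contribution $e_\a\cdot\psi^k\otimes R^N_{\phi_*e_\a,V}W_k$ emerges directly from the commutator $[\td\n_{\partial_t},\td\n_{e_\a}]$ with $V$ already sitting algebraically in a curvature slot---no ``integration by parts to move a derivative off $V$'' is needed, only the Riemann symmetry $g(R_{A,B}C,D)=g(R_{C,D}A,B)$ to rewrite $\lan W_j,R^N_{\phi_*e_\a,V}W_k\ran$ as $-\lan R^N_{\overline W_j,W_k}\phi_*e_\a,V\ran$. That integration-by-parts step in your outline is therefore misplaced; the rest is sound.
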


\begin{proof}

At first, we consider a family of $\psi_t$ with $\f{d\psi_t}{dt}=\eta$ at $t=0$ and fix $\phi$. Since $D\hskip -2.9mm \slash\ $
is formally self-adjoint (see \cite{c-j-l-w1}), we have
\begin{equation}
\aligned
\f{dL}{dt}\Big|_{t=0}&=\int_M \lan \eta,D\hskip -2.9mm \slash\ \psi\ran+\lan \psi,D\hskip -2.9mm \slash\ \eta\ran=\int_M \lan \eta,D\hskip -2.9mm \slash\ \psi\ran+\lan D\hskip -2.9mm \slash\ \psi,\eta\ran\\
&=2\int_M Re\lan \eta,D\hskip -2.9mm \slash\ \psi\ran.
\endaligned
\end{equation}
Since $\eta$ could be chosen arbitrarily, (\ref{eq1}) is easily followed.

Now we consider a variation $\phi_t$ ($t\in (-\ep,\ep)$) of $\phi$ such that $\phi_t=\phi$ outside a compact
set $K\subset\subset \phi^{-1}(U)$ and $\phi_t(K)\subset U$; denote $\psi(y)=\psi^j(y)\otimes W_j(\phi(y))$ for each $y\in U$, then we define
$\psi_t(y)=\psi^j(y)\otimes W_j(\phi_t(y))$.  Denote $\xi=\f{d\phi_t}{dt}\big|_{t=0}$.
Obviously
\begin{equation}\label{psi}
\f{dL}{dt}\Big|_{t=0}=\int_M \f{d}{dt}\Big|_{t=0}|d\phi_t|^2+\int_M \f{d}{dt}\Big|_{t=0}\lan \psi,D\hskip -2.9mm \slash\ \psi\ran=I+II,
\end{equation}
and
\begin{equation}\label{psi1}
I=-2\int_M \lan \xi,\tau(\phi)\ran.
\end{equation}
Here $\tau(\phi)$ denotes the tension field of $\phi$.
Since $\psi=\psi^j\otimes W_j$, we have
\begin{equation}
D\hskip -2.9mm \slash\ \psi=\p\hskip -2.6mm \slash\ \psi^k\otimes W_k+e_\a\cdot \psi^k\otimes \n_{e_\a}W_k,
\end{equation}
where $\p\hskip -2.6mm \slash\ $ denotes the usual Dirac operator.
Then
$$\aligned
\f{d}{dt}\Big|_{t=0}D\hskip -2.9mm \slash\ \psi&=\p\hskip -2.6mm \slash\ \psi^k\otimes \n_{\f{\p}{\p t}}W_k+e_\a\cdot \psi^k\otimes \n_{\f{\p}{\p t}}\n_{e_\a}W_k\\
&=e_\a\cdot \n_{e_\a}\psi^k\otimes \n_{\f{\p}{\p t}}W_k+e_\a\cdot \psi^k\otimes \n_{e_\a}\n_{\f{\p}{\p t}}W_k+e_\a\cdot \psi^k\otimes R_{e_\a,\f{\p}{\p t}}W_k\\
&=e_\a\cdot \n_{e_\a}(\psi^k\otimes \n_{\f{\p}{\p t}}W_k)+e_\a\cdot \psi^k\otimes R_{e_\a,\f{\p}{\p t}}W_k\\
&=D\hskip -2.9mm \slash\ (\psi^k\otimes \n_{\f{\p}{\p t}}W_k)+e_\a\cdot \psi^k\otimes R^N_{\phi_*(e_\a),\xi}W_k.
\endaligned$$
Please note that here and in the following text $R_{XY}=-[\n_X,\n_Y]+\n_{[X,Y]}$.
In conjunction with (\ref{eq1}), we have
\begin{equation}\label{psi2}
\aligned
II&=\int_M \lan \f{d}{dt}\Big|_{t=0}\psi,D\hskip -2.9mm \slash\ \psi\ran+\lan \psi,\f{d}{dt}\Big|_{t=0}D\hskip -2.9mm \slash\ \psi\ran\\
&=\int_M \lan \psi,D\hskip -2.9mm \slash\ (\psi^k\otimes \n_{\f{\p}{\p t}}W_k)+e_\a\cdot \psi^k\otimes R^N_{\phi_*(e_\a),\xi}W_k\ran\\
&=\int_M \lan D\hskip -2.9mm \slash\ \psi,\psi^k\otimes \n_{\f{\p}{\p t}}W_k\ran+\int_M \lan \psi^j,e_\a\cdot \psi^k\ran\lan W_j,R_{\phi_*(e_\a),\xi}^N W_k\ran\\
&=\int_M \lan \psi^j,e_\a\cdot \psi^k\ran\overline{\lan R_{\phi_*(e_\a),\xi}^N W_k,W_j\ran}\\
&=-\int_M \lan \psi^j,e_\a\cdot \psi^k\ran\lan R_{\overline{W}_j,W_k}^N \phi_*(e_\a),\xi\ran
\endaligned
\end{equation}
Substituting (\ref{psi1}) and (\ref{psi2}) into (\ref{psi}) yields
\begin{equation}
\f{dL}{dt}=-\int_M \Big\lan 2\tau(\phi)+\lan \psi^j,e_\a\cdot \psi^k\ran R_{\overline{W}_j,W_k}^N \phi_*(e_\a),\xi\Big\ran.
\end{equation}
Thereby (\ref{eq4}) follows.

\end{proof}

\begin{rem}

The Euler-Lagrange equations of $L$ was firstly derived in \cite{c-j-l-w1}. But our denotation is different.

\end{rem}

\Section{Zeros of harmonic spinor fields}{Zeros of harmonic spinor fields}\label{s1}

In this section, $M$ and $N$ are both oriented Riemannian surfaces.
Then $\Si M=\Si^+ M\oplus \Si^- M$,
where
\begin{equation}\label{psi+}
\Si^\pm M=\{\xi\in \Si M: \sqrt{-1}e_1\cdot e_2\cdot \xi=\pm 1\}.
\end{equation}
(Here $\{e_1,e_2\}$ is an orthonormal basis of $T_{\pi(\xi)}M$, and $\pi$ denotes the bundle projection of $\Si M$ onto $M$.)
In conjunction with $(\phi^{-1}TN)^\C=\phi^{-1}(T^{(1,0)}N)\oplus \phi^{-1}(T^{(0,1)}N)$, we have
\begin{equation}\aligned
\Si M\otimes (\phi^{-1}TN)^\C&=\big(\Si^+M\otimes \phi^{-1}(T^{(1,0)}N)\big)\oplus \big(\Si^+M\otimes \phi^{-1}(T^{(0,1)}N)\big)\\
&\oplus\big(\Si^-M\otimes \phi^{-1}(T^{(1,0)}N)\big)\oplus\big(\Si^-M\otimes \phi^{-1}(T^{(0,1)}N)\big).
\endaligned
\end{equation}
Denote by $\pi_1^+,\pi_0^+,\pi_1^-,\pi_0^-$ the projections of $\Si M\otimes (\phi^{-1}TN)^\C$ onto the
subbundles, respectively. Let $X$ be a tangent vector field on $M$, then $\n_X$ keeps
$\G(\Si^\pm M)$, $\G\big(\phi^{-1}(T^{(1,0)}N)\big)$ and $\G\big(\phi^{-1}(T^{(0,1)}N)\big)$ invariant,
and $X\cdot \Si^\pm M\subset \Si^\mp M$; therefore
\begin{equation}
\aligned
&\Dirac\Big(\G\big(\Si^\pm M\otimes \phi^{-1}(T^{(1,0)}N)\big)\Big)\subset \G\big(\Si^\mp M\otimes \phi^{-1}(T^{(1,0)}N)\big),\\
&\Dirac\Big(\G\big(\Si^\pm M\otimes \phi^{-1}(T^{(0,1)}N)\big)\Big)\subset \G\big(\Si^\mp M\otimes \phi^{-1}(T^{(0,1)}N)\big).
\endaligned
\end{equation}
Hence $\Dirac \psi=0$ yields that $\pi_1^+(\psi), \pi_0^+(\psi), \pi_1^-(\psi), \pi_0^-(\psi)$ are all harmonic spinor fields along $\phi$.

Let $\psi\in \G\big(\Si^+ M\otimes \phi^{-1}(T^{(1,0)}N)\big)$ be harmonic, we shall derive the equation
of $\psi$ in local complex coordinates.

Let $z=x+\sqrt{-1}y$, $w=u+\sqrt{-1}v$ be complex coordinates of $M, N$, respectively. Then the metric of
$M, N$ are of the forms $\la(z)|dz|^2$, $\rho(w)|dw|^2$, respectively.
Denote
\begin{equation}
s=\{e_1,e_2\},\qquad \mbox{where } e_1=\la^{-\f{1}{2}}\f{\p}{\p x},\ e_2=\la^{-\f{1}{2}}\f{\p}{\p y}.
\end{equation}
then $s$ is a local tangent orthonormal frame bundle; i.e. $s$ is a smooth section of $SO U$, $U\subset M$.
Let $\td{s}\in \G(Spin U)$ be a lift of $s$, i.e. $\eta\circ \td{s}=s$. Denote
\begin{equation}\label{d1}
\psi^+=[\td{s},\si],\qquad \psi^-=e_1\cdot \psi^+,
\end{equation}
where $\si$ is a unit vector in $\Si_2^+$, then from (\ref{psi+}),
\begin{equation}\label{eq2}
\aligned
&e_1\cdot \psi^+=\psi^-,\qquad e_1\cdot \psi^-=-\psi^+,\\
&e_2\cdot \psi^+=\sqrt{-1}\psi^-,\qquad e_2\cdot \psi^-=\sqrt{-1}\psi^+.
\endaligned
\end{equation}
And furthermore,
\begin{equation}
\aligned
&\f{\p}{\p z}\cdot \psi^+=\la^{\f{1}{2}}\psi^-,\qquad \f{\p}{\p z}\cdot \psi^-=0,\\
&\f{\p}{\p \bar{z}}\cdot \psi^+=0,\qquad \f{\p}{\p \bar{z}}\cdot \psi^-=-\la^{\f{1}{2}}\psi^+.
\endaligned
\end{equation}
By the definition of the connection on $\Si M$, we
have
\begin{equation}
\aligned
\p\hskip -2.6mm \slash\ \psi^+&=e_\a\cdot \n_{e_\a}\psi^+=\f{1}{2}e_\a\cdot \lan \n_{e_\a}e_1,e_2\ran e_1\cdot e_2\cdot \psi^+\\
&=\f{1}{2}\lan \n_{e_1}e_1,e_2\ran e_1\cdot e_1\cdot e_2\cdot \psi^++\f{1}{2}\lan \n_{e_2}e_1,e_2\ran e_2\cdot e_1\cdot e_2\cdot \psi^+\\
&=\f{1}{2}\f{\p \la^{-\f{1}{2}}}{\p y}(-\sqrt{-1})\psi^--\f{1}{2}\f{\p \la^{-\f{1}{2}}}{\p x}\psi^-\\
&=-\f{\p \la^{-\f{1}{2}}}{\p \bar{z}}\psi^-.
\endaligned
\end{equation}
Let $f$ be a smooth function on $U$, such that $\psi=f \psi^+\otimes \f{\p}{\p w}$, then
\begin{equation}\label{Dirac1}
\aligned
0&=\Dirac \psi=
D\hskip -2.9mm \slash\ (f\psi^+\otimes \f{\p}{\p w})\\
&=\p\hskip -2.6mm \slash\ \psi^+\otimes f\f{\p}{\p w}+\f{2}{\la}\Big(\f{\p}{\p z}\cdot \psi^+\otimes \n_{\f{\p}{\p \bar{z}}}(f\f{\p}{\p w})+\f{\p}{\p \bar{z}}\cdot \psi^+\otimes \n_{\f{\p}{\p z}}(f\f{\p}{\p w})\Big)\\
&=-\f{\p \la^{-\f{1}{2}}}{\p \bar{z}}\psi^-\otimes f\pw+2\la^{-\f{1}{2}}\psi^-\otimes \Big(\pd{f}{\bar{z}}\pw+f\n_{\pd{w}{\bar{z}}\pw+\pd{\bar{w}}{\bar{z}}\f{\p}{\p \bar{w}}}^N \pw\Big)\\
&=2\la^{-\f{1}{2}}\Big(\f{1}{4}\pd{\log \la}{\bar{z}}f+\pd{\log \rho}{w}\pd{w}{\bar{z}}f+\pd{f}{\bar{z}}\Big)\psi^-\otimes \pw.
\endaligned
\end{equation}
Thereby we get the equation of $f$ as follows:
\begin{equation}\label{eq3}
\pd{f}{\bar{z}}+\Big(\f{1}{4}\pd{\log \la}{\bar{z}}+ \pd{\log\rho}{w}\pd{w}{\bar{z}}\Big)f=0.
\end{equation}
From it, we can prove the following proposition.

\begin{pro}
If $\psi$ is a harmonic spinor field along $\phi$, which is a smooth map between two oriented Riemannian surfaces, then
$|\pi_1^+(\psi)|$ is identically zero or it has isolated zeroes. So are $|\pi_0^+(\psi)|, |\pi_1^-(\psi)|$ and $|\pi_0^-(\psi)|$.

\end{pro}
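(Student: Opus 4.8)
The plan is to reduce the claim about each of the four projections to the single computation already carried out for $\pi_1^+(\psi)$, and then to invoke a standard fact about solutions of $\bar\partial$-type equations with a locally bounded zeroth-order coefficient. First I would observe that the four subbundles appearing in the orthogonal decomposition are all rank-one (over $\C$) after one fixes a local complex coordinate $w$ on $N$: the spinor factors $\Si^\pm M$ are complex line bundles on a surface, and $\phi^{-1}(T^{(1,0)}N)$, $\phi^{-1}(T^{(0,1)}N)$ are spanned locally by $\phi_*$ of $\partial/\partial w$ and $\partial/\partial\bar w$ respectively. Hence, exactly as in \eqref{Dirac1}, any local section of, say, $\Si^+M\otimes\phi^{-1}(T^{(1,0)}N)$ can be written $\pi_1^+(\psi)=f\,\psi^+\otimes\partial/\partial w$ for a smooth complex-valued function $f$ on $U$, and $|\pi_1^+(\psi)|=|f|\cdot|\psi^+\otimes\partial/\partial w|$, with the second factor smooth and strictly positive. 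So the zero set of $|\pi_1^+(\psi)|$ coincides with the zero set of $f$, and it suffices to show $f\equiv 0$ or $f$ has isolated zeros.

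Next I would recall (from the paragraph preceding the proposition) that $\Dirac\psi=0$ forces each of $\pi_1^+(\psi),\pi_0^+(\psi),\pi_1^-(\psi),\pi_0^-(\psi)$ to be a harmonic spinor along $\phi$ in its own right, since $\Dirac$ respects the decomposition. Therefore the equation \eqref{eq3} — derived for a harmonic section of $\Si^+M\otimes\phi^{-1}(T^{(1,0)}N)$ — applies verbatim to $f=\pi_1^+(\psi)$. For the other three pieces I would run the same local computation: replacing $\psi^+$ by $\psi^-$ swaps the roles of $\partial/\partial z\cdot$ and $\partial/\partial\bar z\cdot$ and changes $\partial\log\la/\partial\bar z$ to $\partial\log\la/\partial z$; replacing $\partial/\partial w$ by $\partial/\partial\bar w$ replaces $\partial\log\rho/\partial w$ by $\partial\log\rho/\partial\bar w$ and $\partial w/\partial\bar z$ by the corresponding derivative of $\bar w$. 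In each case one lands on an equation of the shape
\begin{equation}
\pd{f}{\bar z}+a(z,\bar z)\,f=0\qquad\text{or}\qquad \pd{f}{z}+a(z,\bar z)\,f=0,
\end{equation}
with $a$ a smooth (hence locally bounded) coefficient; conjugating turns the second form into the first, so in all four cases $f$ (or $\bar f$) is a local solution of a $\bar\partial$-equation with bounded zeroth-order term.

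The key analytic input is then the similarity principle (Bers--Vekua): if $f\in W^{1,p}_{loc}$ solves $\partial f/\partial\bar z+af=0$ on a domain with $a\in L^\infty_{loc}$, then locally $f=e^{h}g$ with $h$ continuous and $g$ holomorphic; consequently $f$ is either identically zero or has only isolated zeros, each of finite order equal to the vanishing order of $g$. I would state this as the one external fact being used and cite a standard reference (e.g. \cite{s-y}, or Vekua's book) rather than reprove it. Applying it on a covering of $M$ by coordinate charts, and using that $M$ is connected, gives the dichotomy globally: if $f$ vanishes on an open set it vanishes identically, otherwise its zeros are isolated. Finally I would remark that the finite-order conclusion (which will be needed later for the Bochner arguments) comes for free from the same representation. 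The main obstacle is bookkeeping rather than depth: one must be careful that the three analogues of \eqref{eq3} really do come out in $\bar\partial$- or $\partial$-form with a smooth coefficient, i.e. that no term with a $\partial f/\partial z$ and a $\partial f/\partial\bar z$ simultaneously survives — this is guaranteed precisely because each projection lives in a \emph{single} $\Si^\pm M\otimes\phi^{-1}(T^{(\bullet)}N)$ summand, so the Clifford action of exactly one of $\partial/\partial z,\partial/\partial\bar z$ kills $\psi^\pm$, exactly as in \eqref{Dirac1}.
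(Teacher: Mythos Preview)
Your proposal is correct and follows the same route as the paper: write each projection locally as $f\,\psi^\pm\otimes\partial/\partial w^{(\bullet)}$, observe that harmonicity of $\psi$ forces $f$ to satisfy a first-order $\bar\partial$-type equation $\partial f/\partial\bar z+af=0$ with smooth $a$, and conclude via the representation $f=e^{-\zeta}\,(\text{holomorphic})$. The only cosmetic difference is that the paper carries out the integrating-factor step explicitly (solve $\partial\zeta/\partial\bar z=a$ locally, then $fe^{\zeta}$ is holomorphic) rather than citing the Bers--Vekua similarity principle as a black box; since $a$ here is smooth, the explicit argument is entirely elementary and avoids the need for an external reference.
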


\begin{proof}

As we have seen, $\pi_1^+(\psi)$ is harmonic whenever $\psi$ is harmonic. Denote
$\pi_1^+(\psi)=f\psi^+\otimes \pw$, then $f$ satisfies (\ref{eq3}). Denote
$$h(z)=\f{1}{4}\pd{\log \la}{\bar{z}}+ \pd{\log\rho}{w}\pd{w}{\bar{z}},$$
then $\pd{f}{\bar{z}}+hf=0$. Let $\ze$ be a local solution of $\pd{\ze}{\bar{z}}=h$,
then
$$\pd{(fe^\ze)}{\bar{z}}=-h fe^\ze+hf e^\ze=0.$$
i.e. $fe^\ze$ is holomorphic. Hence the conclusion follows from the well-known fact that the zeros of a holomorphic function are isolated, unless it is identically
zero. And the proof for $|\pi_0^+(\psi)|, |\pi_1^-(\psi)|$ and $|\pi_0^-(\psi)|$ is similar.

\end{proof}

\Section{Weitzenb\"{o}ck-type formulas and Bochner-type formulas}{Weitzenb\"{o}ck-type formulas and Bochner-type formulas}\label{s3}

For a spinor field $\psi$ along a map $\phi:M\ra N$, where $(M^m,h)$ is a Riemannian spin manifold, we can proceed
as \cite{c-j-l-w1} Proposition 3.4 to have the following Weitzenb\"{o}ck-type formula.
\begin{equation}
D\hskip -2.9mm \slash\ ^2\psi=-\td{\n}_{e_\a}\td{\n}_{e_\a}\psi+\f{1}{4}S\psi+\f{1}{2}\sum_{\a\neq \be}e_\be\cdot e_\a\cdot \psi^j\otimes R_{\phi_*(e_\a),\phi_*(e_\be)}^N W_j.
\end{equation}
Here $\psi=\psi^j\otimes W_j$, $\{e_\a\}$ is a local tangent orthonormal frame field on $M$ such that $\n e_\a=0$ at the considered point,
and $S$ is the scalar curvature. When $m=2$, $S=2K_M$, where $K_m$ denote the Gauss curvature of $M$, then
\begin{equation}\label{wei}
D\hskip -2.9mm \slash\ ^2\psi=-\td{\n}_{e_\a}\td{\n}_{e_\a}\psi+\f{1}{2}K_M\psi+e_2\cdot e_1\cdot \psi_j\otimes R_{\phi_*(e_1),\phi_*(e_2)}^N W_j.
\end{equation}

Now we assume $M, N$ are both oriented Riemannian surfaces; $\psi\in \G\big(\Si^+ M\otimes \phi^{-1}(T^{(1,0)}N)\big)$ and $D\hskip -2.9mm \slash\ \psi=0$. For arbitrary $x\in M$,
let $r>0$ such that $\exp_x:B(r)\ra M$ is injective; denote $U=\exp_x\big(B(r)\big)$, then we can define a local section of
$\Si^+ M$ (denoted by $\psi^+$) and a local section of $\phi^{-1}(T^{(1,0)}N)$ (denoted by $W$) on $U$, such that
for any geodesic $\g$ starting from $x$,
$$\n_{\dot{\g}}\psi^+=\n_{\dot{\g}}W=0,$$
and $\lan \psi^+,\psi^+\ran=1$, $\lan W,W\ran=1$;
hence at $x$,
$$\n_{e_\a}\psi^+=\n_{e_\a}\n_{e_\a}\psi^+=\n_{e_\a}W=\n_{e_\a}\n_{e_\a}W=0.$$
$\psi$ could locally be expressed by $\psi=f\psi^+\otimes W$, then at $x$,
\begin{equation}\label{wei1}
\td{\n}_{e_\a}\td{\n}_{e_\a}\psi=\De f\ \psi^+\otimes W.
\end{equation}
Denote $W=\f{\sqrt{2}}{2}(V_1-\sqrt{-1}V_2)$, then $V_2=J^N V_1$, where $J^N$ is the complex structure on $N$, and
$g(V_i,V_j)=\de_{ij}$. Denote
$$\phi_* e_i=\phi_{ij}V_j,$$
then
\begin{equation}\label{wei2}
\aligned
R_{\phi_* e_1,\phi_* e_2}^N W&=R_{\phi_{11}V_1+\phi_{12}V_2,\phi_{21}V_1+\phi_{22}V_2}^N\big(\f{\sqrt{2}}{2}(V_1-\sqrt{-1}V_2)\big)\\
&=\det(\phi_{ij})R_{V_1,V_2}^N\big(\f{\sqrt{2}}{2}(V_1-\sqrt{-1}V_2)\big)\\
&=\sqrt{-1}J(\phi)K_N\ W.
\endaligned
\end{equation}
Here $J(\phi)$ denotes the Jacobian of $\phi$ and $K_N$ denotes the Gauss curvature of $N$.
Substituting (\ref{wei1}) and (\ref{wei2}) into (\ref{wei}) yields
\begin{equation}\aligned
0&=D\hskip -2.9mm \slash\ ^2\psi=-\td{\n}_{e_\a}\td{\n}_{e_\a}\psi+\f{1}{2}K_M\psi+e_2\cdot e_1\cdot \psi^+\otimes R_{\phi_*e_1,\phi_* e_2}^N W\\
&=(-\De f+\f{1}{2}K_M f-K_N J(\phi)f)\ \psi^+\otimes W.
\endaligned
\end{equation}
i.e.
\begin{equation}
\De f=\f{1}{2}K_M f-K_N J(\phi)f\qquad \mbox{at }x.
\end{equation}
Furthermore,
\begin{equation}\label{Bochner1}
\aligned
\De |\psi|^2&=\De |f|^2=\bar{f}\De f+f\De \bar{f}+2|\n f|^2\\
&=K_M |\psi|^2-2K_N J(\phi)|\psi|^2+2|\n \psi|^2.
\endaligned
\end{equation}

From
\begin{equation}
\aligned
0&=e_1\cdot D\hskip -2.9mm \slash\ \psi=e_1\cdot e_1\cdot \n_{e_1}\psi+e_1\cdot e_2\cdot \n_{e_2}\psi\\
&=-(\n_{e_1}f)\psi^+\otimes W+(\n_{e_2}f)e_1\cdot e_2\cdot \psi^+\otimes W\\
&=-(\n_{e_1}f+\sqrt{-1}\n_{e_2}f)\psi^+\otimes W
\endaligned
\end{equation}
we have
\begin{equation}
\n_{\bar{Z}}f=0.
\end{equation}
Here $Z=\f{\sqrt{2}}{2}(e_1-\sqrt{-1}e_2)$ and $\bar{Z}=\f{\sqrt{2}}{2}(e_1+\sqrt{-1}e_2)$, which satisfy
$h(Z,\bar{Z})=1, h(Z,Z)=h(\bar{Z},\bar{Z})=0$. Then $\n f=(\n_Z f)\bar{Z}$ and
\begin{equation}
|\n \psi|^2=|\n f|^2=|\n_Z f|^2.
\end{equation}
Furthermore, from
\begin{equation}
\aligned
\n|\psi|^2&=\n|f|^2=f\n \bar{f}+\bar{f}\n f\\
&=f(\n_{\bar{Z}}\bar{f})Z+\bar{f}(\n_Z f)\bar{Z}
\endaligned
\end{equation}
we arrive at
\begin{equation}\label{Bochner2}
\big|\n|\psi|^2\big|^2=2|f|^2|\n_Z f|^2=2|\psi|^2|\n \psi|^2.
\end{equation}
Substituting (\ref{Bochner2}) into (\ref{Bochner1}) yields
\begin{equation}
\De|\psi|^2=K_M |\psi|^2-2K_N J(\phi)|\psi|^2+\f{\big|\n|\psi|^2\big|^2}{|\psi|^2}.
\end{equation}
And at last we derive the following Bochner-type formula
\begin{equation}
\De \log|\psi|=\f{1}{2}K_M-K_N J(\phi).
\end{equation}
Similarly, when $\psi\in \Si^+M\otimes \phi^{-1}(T^{(0,1)}N)$, $\Si^-M\otimes \phi^{-1}(T^{(1,0)}N)$ or $\Si^-M\otimes \phi^{-1}(T^{(0,1)}N)$,
the corresponding Bochner-type formulas could be derived. We write those results as the following theorem.

\begin{thm}
Let $M$ and $N$ are both oriented Riemannian surfaces.
If $\psi$ is a harmonic spinor field along $\phi:M\ra N$, then $\log\big|\pi_1^+(\psi)\big|$, $\log\big|\pi_0^+(\psi)\big|$,
$\log\big|\pi_1^-(\psi)\big|$ and $\log\big|\pi_0^-(\psi)\big|$ satisfy Bochner-type formulas as follows:
\begin{eqnarray}
&&\De \log\big|\pi_1^+(\psi)\big|=\f{1}{2}K_M-K_N J(\phi),\label{Bochner3}\\
&&\De \log\big|\pi_0^+(\psi)\big|=\f{1}{2}K_M+K_N J(\phi),\\
&&\De \log\big|\pi_1^-(\psi)\big|=\f{1}{2}K_M+K_N J(\phi),\\
&&\De \log\big|\pi_0^-(\psi)\big|=\f{1}{2}K_M-K_N J(\phi).\label{Bochner4}
\end{eqnarray}
\end{thm}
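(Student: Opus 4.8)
The case $\psi\in\G\big(\Si^+ M\otimes\phi^{-1}(T^{(1,0)}N)\big)$, which produced (\ref{Bochner3}), has just been carried out in full, so the plan is to repeat that computation \emph{verbatim} for the three remaining summands, changing only the choice of adapted local sections and keeping careful track of two signs. Fix $x\in M$ and, exactly as above, on a sufficiently small geodesic ball $U$ centered at $x$ choose a unit local section $\xi$ of $\Si^+ M$ or of $\Si^- M$ and a unit local section $\om$ of $\phi^{-1}(T^{(1,0)}N)$ or of $\phi^{-1}(T^{(0,1)}N)$ --- the four choices $\Si^\pm M\otimes\phi^{-1}(T^{(1,0)/(0,1)}N)$ matching the four summands --- each parallel along every geodesic issuing from $x$, so that $\n_{e_\a}\xi=\n_{e_\a}\n_{e_\a}\xi=\n_{e_\a}\om=\n_{e_\a}\n_{e_\a}\om=0$ at $x$. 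Writing the harmonic spinor under study as $\psi=f\,\xi\otimes\om$, one still has $\td\n_{e_\a}\td\n_{e_\a}\psi=\De f\,\xi\otimes\om$ at $x$, exactly as in (\ref{wei1}).

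Next I would recompute the two structural terms of the Weitzenb\"ock formula (\ref{wei}). By the defining property (\ref{psi+}) of $\Si^\pm M$ one has $e_2\cdot e_1\cdot\xi=\sqrt{-1}\,\xi$ when $\xi$ is a section of $\Si^+ M$ and $e_2\cdot e_1\cdot\xi=-\sqrt{-1}\,\xi$ when $\xi$ is a section of $\Si^- M$; and the curvature computation (\ref{wei2}) gives $R^N_{\phi_*e_1,\phi_*e_2}\om=\sqrt{-1}J(\phi)K_N\,\om$ when $\om$ takes values in $\phi^{-1}(T^{(1,0)}N)$, while taking complex conjugates gives $R^N_{\phi_*e_1,\phi_*e_2}\om=-\sqrt{-1}J(\phi)K_N\,\om$ when $\om$ takes values in $\phi^{-1}(T^{(0,1)}N)$. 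Substituting these into (\ref{wei}), whose left side vanishes since $\Dirac\psi=0$, then gives, at $x$, $\De f=\f12 K_M f-\ep\,K_N J(\phi)\,f$, where $\ep\in\{+1,-1\}$ is the product of the sign attached to the $\Si^\pm M$-factor and the sign produced in conjugating (\ref{wei2}); explicitly $\ep=+1$ for the summands $\Si^+ M\otimes\phi^{-1}(T^{(1,0)}N)$ and $\Si^- M\otimes\phi^{-1}(T^{(0,1)}N)$, and $\ep=-1$ for the other two.

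From here the argument is word-for-word the earlier one. The identity $\De|\psi|^2=\De|f|^2=\bar f\De f+f\De\bar f+2|\n f|^2$ turns the equation for $f$ into $\De|\psi|^2=K_M|\psi|^2-2\ep\,K_N J(\phi)|\psi|^2+2|\n\psi|^2$; the first-order equation $\Dirac\psi=0$ forces $\n f$ to be of pure type, the type ($(1,0)$ or $(0,1)$) depending only on the $\Si^\pm M$-factor, and the computation leading to (\ref{Bochner2}) then gives $\big|\n|\psi|^2\big|^2=2|\psi|^2|\n\psi|^2$ in each of the four cases. Eliminating $|\n\psi|^2$ between the last two identities and dividing by $|\psi|^2$ on the set where $|\psi|>0$ --- the complement of the isolated zero set furnished by the Proposition of Section \ref{s1} --- yields $\De\log|\psi|=\f12 K_M-\ep\,K_N J(\phi)$; and since $x$ was arbitrary this holds throughout that set. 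Applying it with the role of $\psi$ played successively by $\pi_1^+(\psi),\pi_0^+(\psi),\pi_1^-(\psi),\pi_0^-(\psi)$, each of which is a harmonic spinor field along $\phi$ by the discussion at the beginning of Section \ref{s1}, and matching $\ep$ to the four cases, gives (\ref{Bochner3})--(\ref{Bochner4}).

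The only genuine obstacle is the sign bookkeeping: one must keep the sign of $\sqrt{-1}e_1\cdot e_2=\pm1$ on $\Si^\pm M$ cleanly separated from the sign picked up when conjugating the curvature identity (\ref{wei2}), and one must check that the first-order consequence of $\Dirac\psi=0$ still delivers $\big|\n|\psi|^2\big|^2=2|\psi|^2|\n\psi|^2$ whether $\n f$ comes out of type $(1,0)$ or $(0,1)$. Both points are mechanical once the geodesically parallel sections at $x$ are fixed, so the three remaining cases require no idea beyond the one already used for $\pi_1^+(\psi)$.
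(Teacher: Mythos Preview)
Your proposal is correct and follows exactly the paper's approach: the paper carries out the case $\Si^+ M\otimes\phi^{-1}(T^{(1,0)}N)$ in full and then dispatches the other three summands with a single sentence (``Similarly, \ldots\ the corresponding Bochner-type formulas could be derived''), while you spell out the sign bookkeeping --- the $\pm\sqrt{-1}$ from $e_2\cdot e_1$ on $\Si^\pm M$ versus the $\pm\sqrt{-1}$ from conjugating (\ref{wei2}) --- that this ``similarly'' hides. Your observation that the Kato-type identity (\ref{Bochner2}) survives regardless of whether $\n f$ is of type $(1,0)$ or $(0,1)$ is exactly the second mechanical check the paper is implicitly invoking.
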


When $M$ and $N$ are both compact, since the zeros of $|\pi_1^+(\psi)|$ are isolated, there exist a finite number of
zeros $p_1,\cdots,p_k$ in $M$. And similarly the zeros of $|\pi_0^+(\psi)|$, $|\pi_1^-(\psi)|$ and $|\pi_0^-(\psi)|$
are finite.
Integrating both side of (\ref{Bochner3})-(\ref{Bochner4}) on $M$, in conjunction with divergence theorem and Gauss-Bonnet formula, we can proceed as
\cite{s-y}pp. 11-12 to get the proposition:

\begin{thm}\label{t1}
Let $M$ and $N$ are both compact oriented Riemannian surfaces, $\psi$ is a harmonic spinor field along $\phi:M\ra N$.
If $|\pi_1^+(\psi)|$ is not identically zero,
then
\begin{equation}
\sum_{p\in M,|\pi_1^+{\psi}|(p)=0}n_p^+=g_M-1-\deg(\phi)(2g_N-2).
\end{equation}
If $|\pi_0^+(\psi)|$ is not identically zero,
then
\begin{equation}
\sum_{p\in M,|\pi_0^+{\psi}|(p)=0}m_p^+=g_M-1+\deg(\phi)(2g_N-2).
\end{equation}
If $|\pi_1^-(\psi)|$ is not identically zero,
then
\begin{equation}
\sum_{p\in M,|\pi_1^-{\psi}|(p)=0}n_p^-=g_M-1+\deg(\phi)(2g_N-2).
\end{equation}
If $|\pi_0^-(\psi)|$ is not identically zero,
then
\begin{equation}
\sum_{p\in M,|\pi_0^-{\psi}|(p)=0}m_p^-=g_M-1-\deg(\phi)(2g_N-2).
\end{equation}
Here $n_p^+, m_p^+, n_p^-, m_p^-$ are respectively the order of $|\pi_1^+(\psi)|$, $|\pi_0^+(\psi)|$, $|\pi_1^-(\psi)|$,
$|\pi_0^-(\psi)|$ at $p$; $\deg(\phi)$ denotes the degree of mapping; $g_M$ and $g_N$ are genus of $M$ and $N$, respectively.

\end{thm}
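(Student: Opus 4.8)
The plan is to integrate the four Bochner-type formulas (\ref{Bochner3})--(\ref{Bochner4}) over $M$ after excising small disks around the zeros, following \cite{s-y}, pp.~11--12. Since $\Dirac\psi=0$ forces each of $\pi_1^+(\psi),\pi_0^+(\psi),\pi_1^-(\psi),\pi_0^-(\psi)$ to be a harmonic spinor field along $\phi$ (Section \ref{s1}), all four identities are available, and the four conclusions come from one and the same computation; I shall carry out the case of $|\pi_1^+(\psi)|$ and indicate the sign changes needed for the other three.

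First I would record the local behaviour of $\log|\pi_1^+(\psi)|$ near one of its zeros $p$. By the Proposition of Section \ref{s1} these zeros are isolated, hence finite in number; and by its proof there is, near $p$, a complex coordinate $z$ with $z(p)=0$ such that $\pi_1^+(\psi)=f\,\psi^+\otimes W$ with $|\psi^+|=|W|=1$ and $fe^{\ze}$ holomorphic for a suitable smooth $\ze$. Writing $fe^{\ze}=z^{n_p^+}g(z)$ with $g$ holomorphic and $g(0)\neq 0$ identifies $n_p^+$ as a positive integer, the order of the zero, and yields
\[
\log\big|\pi_1^+(\psi)\big|=\log|f|=n_p^+\log|z|+\big(\log|g|-\Re\ze\big),
\]
the bracketed term being smooth near $p$.

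Next I would apply the divergence theorem. Let $p_1,\dots,p_k$ be the zeros of $|\pi_1^+(\psi)|$ and set $M_\ep=M\setminus\bigcup_i B_\ep(p_i)$. On $M_\ep$ the formula (\ref{Bochner3}) holds pointwise and $\log|\pi_1^+(\psi)|$ is smooth, so
\[
\int_{M_\ep}\Big(\f12 K_M-K_N J(\phi)\Big)*1=\int_{M_\ep}\De\log\big|\pi_1^+(\psi)\big|\,*1=-\sum_{i=1}^k\int_{\partial B_\ep(p_i)}\pd{}{r}\log\big|\pi_1^+(\psi)\big|\,ds,
\]
the minus sign because the outward normal of $M_\ep$ on $\partial B_\ep(p_i)$ points toward $p_i$. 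Inserting the local model and letting $\ep\to0$, the logarithmic part of each boundary integral contributes $2\pi n_{p_i}^+$ and the smooth part contributes $O(\ep)$, while the left-hand side tends to $\int_M(\f12 K_M-K_N J(\phi))*1$ since its integrand is bounded. Therefore
\[
\int_M\Big(\f12 K_M-K_N J(\phi)\Big)*1=-2\pi\sum_{p}n_p^+.
\]

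Finally I would evaluate the left-hand side. By Gauss-Bonnet $\int_M K_M*1=2\pi(2-2g_M)$; and since $J(\phi)*1$ is the $\phi$-pullback of the area form of $N$, the degree formula gives $\int_M K_N J(\phi)*1=\deg(\phi)\int_N K_N*1=2\pi\deg(\phi)(2-2g_N)$. Substituting and dividing by $-2\pi$ yields $\sum_p n_p^+=g_M-1-\deg(\phi)(2g_N-2)$, which is the first assertion. The remaining three follow identically: (\ref{Bochner4}) carries the same sign as (\ref{Bochner3}) and so gives the same formula for $\sum m_p^-$, while the two identities with $+K_N J(\phi)$ give $g_M-1+\deg(\phi)(2g_N-2)$ for $\sum m_p^+$ and $\sum n_p^-$. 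I expect the only point requiring genuine care is the local analysis near the zeros---establishing the precise logarithmic model with integer coefficient $n_p^+$ and fixing the orientation of $\partial M_\ep$ so that the boundary contributions combine into $-2\pi\sum_p n_p^+$ with the correct sign; the rest is a routine assembly of (\ref{Bochner3})--(\ref{Bochner4}), the Gauss-Bonnet theorem, and the change-of-variables formula for the mapping degree.
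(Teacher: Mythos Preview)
Your proposal is correct and follows precisely the approach the paper indicates: integrate the Bochner-type formulas (\ref{Bochner3})--(\ref{Bochner4}) over $M$ with small disks around the zeros removed, use the divergence theorem and the local holomorphic model from Section~\ref{s1} to identify the boundary contributions as $-2\pi$ times the orders, and then invoke Gauss--Bonnet and the degree formula---exactly the Schoen--Yau argument \cite{s-y}, pp.~11--12, to which the paper itself defers. Your write-up in fact supplies the details the paper omits, and your sign bookkeeping (orientation of $\partial M_\ep$, the resulting $-2\pi\sum n_p^+$, and the final substitution) checks out.
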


\Section{Proof of main theorems}{Proof of main theorems}\label{s4}

In conjunction with Proposition \ref{p1} and Theorem \ref{t1}, it is not difficult to obtain:

\begin{thm}\label{t2}

$M$ and $N$ are both compact oriented Riemannian surfaces, and $(\phi,\psi)$ is a Dirac-harmonic map from $M$ to $N$.
If $g_M=0$ or $|g_M-1|<|\deg(\phi)||2g_N-2|$, then $\phi$ has to be a harmonic map.

\end{thm}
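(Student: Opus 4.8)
The plan is to show that, under either hypothesis, the right-hand side of the second Euler--Lagrange equation~(\ref{eq4}) vanishes identically; since $(\phi,\psi)$ being Dirac-harmonic also forces (\ref{eq1}), so that $\psi$ is a harmonic spinor along $\phi$, this will both exhibit $\tau(\phi)=0$ and make Theorem~\ref{t1} available.

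First I would use that $N$ is a surface to make (\ref{eq4}) explicit. Choosing the local complex frame on $N$ so that $W_1=W$ spans $T^{(1,0)}N$ and $W_2=\overline W$ spans $T^{(0,1)}N$, the computation behind (\ref{wei2}) shows that the only nonzero components of $R^N$ are $R^N_{\overline W,W}$ and $R^N_{W,\overline W}=-R^N_{\overline W,W}$. Hence the double sum in (\ref{eq4}) collapses to the $j=k=1$ and $j=k=2$ terms:
$$
\tau(\phi)=-\lan\psi^1,e_\a\cdot\psi^1\ran\,R^N_{\overline W,W}\phi_*(e_\a)-\lan\psi^2,e_\a\cdot\psi^2\ran\,R^N_{W,\overline W}\phi_*(e_\a).
$$
Next I would decompose $\psi^1=(\psi^1)^++(\psi^1)^-$ and $\psi^2=(\psi^2)^++(\psi^2)^-$ along $\Si M=\Si^+M\oplus\Si^-M$. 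Because $e_\a\cdot\Si^\pm M\subset\Si^\mp M$ and $\Si^+M\perp\Si^-M$, the terms $\lan(\psi^i)^\pm,e_\a\cdot(\psi^i)^\pm\ran$ of equal chirality vanish, so $\lan\psi^1,e_\a\cdot\psi^1\ran$ involves $(\psi^1)^+$ and $(\psi^1)^-$ jointly and $\lan\psi^2,e_\a\cdot\psi^2\ran$ involves $(\psi^2)^+$ and $(\psi^2)^-$ jointly. Since, in the notation of Section~\ref{s1}, $\pi_1^+(\psi)=(\psi^1)^+\otimes W$, $\pi_1^-(\psi)=(\psi^1)^-\otimes W$, $\pi_0^+(\psi)=(\psi^2)^+\otimes\overline W$ and $\pi_0^-(\psi)=(\psi^2)^-\otimes\overline W$, it follows that $\tau(\phi)\equiv0$ as soon as \emph{at least one} of $\pi_1^+(\psi),\pi_1^-(\psi)$ is identically zero \emph{and at least one} of $\pi_0^+(\psi),\pi_0^-(\psi)$ is identically zero.

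It then remains to force this vanishing from Theorem~\ref{t1}. Write $d=\deg(\phi)(2g_N-2)$. Since the orders $n_p^\pm,m_p^\pm$ are nonnegative integers, Theorem~\ref{t1} gives: if $\pi_1^+(\psi)\not\equiv0$ or $\pi_0^-(\psi)\not\equiv0$ then $g_M-1\ge d$, and if $\pi_0^+(\psi)\not\equiv0$ or $\pi_1^-(\psi)\not\equiv0$ then $g_M-1\ge -d$. Hence if both $\pi_1^+(\psi)$ and $\pi_1^-(\psi)$ were not identically zero we would get $g_M-1\ge|d|$, which is impossible when $g_M=0$ (the left side is $-1$) and contradicts $|g_M-1|<|d|$ when $g_M\ge1$; the same argument rules out $\pi_0^+(\psi)$ and $\pi_0^-(\psi)$ both being nonzero. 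Therefore the criterion of the previous paragraph applies and $\tau(\phi)=0$, i.e.\ $\phi$ is harmonic. I expect the only delicate point to be the reduction in the middle step: one has to track the conventions for the Hermitian product and for the summation over the complex frame carefully in order to be certain that (\ref{eq4}) really collapses to the two displayed terms and that no $(1,0)$--$(0,1)$ cross terms survive. Once that is secured, the curvature identity is exactly (\ref{wei2}) and the remainder is elementary arithmetic with the genus formulas.
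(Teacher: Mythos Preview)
Your proposal is correct and follows essentially the same route as the paper: use Theorem~\ref{t1} to kill two of the four projections $\pi_1^\pm(\psi),\pi_0^\pm(\psi)$, then check that the curvature term on the right of (\ref{eq4}) vanishes. The only difference is organizational. The paper argues that the hypothesis forces $g_M-1-d<0$ or $g_M-1+d<0$ (with $d=\deg(\phi)(2g_N-2)$), and in each case Theorem~\ref{t1} kills the matching \emph{pair} $\{\pi_1^+,\pi_0^-\}$ or $\{\pi_0^+,\pi_1^-\}$; it then plugs the explicit two-term expression for $\psi$ into (\ref{eq4}) and observes that the diagonal terms die by chirality while the cross terms die because $R^N_{\partial_w,\partial_w}=R^N_{\partial_{\bar w},\partial_{\bar w}}=0$. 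You instead first reduce (\ref{eq4}) to the two $j=k$ terms using $R^N_{\overline W,\overline W}=R^N_{W,W}=0$, and then argue by contradiction that at least one of $\pi_1^\pm(\psi)$ and at least one of $\pi_0^\pm(\psi)$ must vanish; this is a slightly weaker vanishing statement than the paper's, but it is exactly what your reduced formula needs. Your worry about $(1,0)$--$(0,1)$ cross terms is unfounded: they are precisely the $R^N_{\overline{W}_1,W_2}=R^N_{\overline W,\overline W}$ and $R^N_{\overline{W}_2,W_1}=R^N_{W,W}$ terms, which vanish by antisymmetry.
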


\begin{proof}

If $g_M=0$ or $|g_M-1|<|\deg(\phi)||2g_N-2|$, then $g_M-1-\deg(\phi)(2g_N-2)<0$ or $g_M-1+\deg(\phi)(2g_N-2)<0$. Hence
from Theorem \ref{t1}, either $\pi_1^+(\psi)=\pi_0^-(\psi)=0$ or $\pi_0^+(\psi)=\pi_1^-(\psi)=0$ could be obtained. If
$\pi_1^+(\psi)=\pi_0^-(\psi)=0$, then there exist smooth functions $f$ and $g$, such that
$$\psi=f\psi^+\otimes \pdw+g\psi^-\otimes \pw,$$
where the definition of $\psi^+,\psi^-,\pw,\pdw$ is similar to Section \ref{s1}.
Hence by (\ref{eq4}),
\begin{equation}
\aligned
\tau(\phi)=&-\f{1}{2}\lan f\psi^+,e_\a\cdot f\psi^+\ran R_{\pw,\pdw}^N\phi_*(e_\a)-\f{1}{2}\lan g\psi^-,e_\a\cdot g\psi^-\ran R_{\pdw,\pw}^N \phi_*(e_\a)\\
&-\f{1}{2}\lan f\psi^+,e_\a\cdot g\psi^-\ran R_{\pw,\pw}^N \phi_*(e_\a)-\f{1}{2}\lan g\psi^-,e_\a\cdot f\psi^+\ran R_{\pdw,\pdw}\phi_*(e_\a)\\
=&0.
\endaligned
\end{equation}
So $\phi$ is a harmonic map. When $\pi_0^+(\psi)=\pi_1^-(\psi)=0$, the proof is similar.

\end{proof}

It is well known that $S^2$ is biholomorphically isomorphic to $\C^*=\C\cup \{\infty\}$; hence in the following text
we identify $S^2$ and $\C^*$.
Let $h=\la(z)|dz|^2$ be a metric on $\C^*$; denote $\td{z}=z^{-1}$, then
$$\la(z)|dz|^2=\la(\td{z}(z))|\td{z}|^{-4}d\td{z}^2;$$
hence $\la(\td{z}(z))|\td{z}|^{-4}$ is regular at $\td{z}=0$; then there exists a constant $c>0$, such that
\begin{equation}\label{metric}
\lim_{z\ra \infty} \la(z)|z|^4=c.
\end{equation}
And the definition of $e_1,e_2,\psi^+,\psi^-$ is similar to Section \ref{s1}.

\begin{thm}

If $M=S^2=\C^*$ equipped with metric $h=\la(z)|dz|^2$ and $N=S^2$ equipped with arbitrary metric, $(\phi,\psi)$ is a nontrivial Dirac-harmonic map from $M$ to $N$, then
$\phi$ has to be holomorphic or anti-holomorphic, $\psi$ could be written in the form
\begin{equation}
\psi=e_\a\cdot \Psi\otimes \phi_*(e_\a);
\end{equation}
and there exist two meromorphic function $u_1,u_2$ on $\C^*$ such that
\begin{equation}\label{twistor}
\Psi=\bar{u}_1\la^{\f{1}{4}}\psi^+ +u_2\la^{\f{1}{4}}\psi^-;
\end{equation}
if $u_i$($i=1$ or $2$) has a pole of order $k$ at $z_0\in \C$,
then $|d\phi|(z_0)=0$ and the order of $|d\phi|$ at $z_0$ is no less than $k$; if $\infty$ is a pole of order $k\geq 2$,
then $|d\phi|(\infty)=0$ and the order of $|d\phi|$ at $\infty$ is no less than $k-1$. And vice versa.
\end{thm}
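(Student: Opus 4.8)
The plan is to show that every nontrivial Dirac-harmonic map $(\phi,\psi)$ from $S^2$ to $S^2$ is of the form constructed in \cite{c-j-l-w1}, by successively pinning down $\phi$, then the admissible components of $\psi$, and finally the twistor spinor $\Psi$ itself. Since $g_M=0$, Theorem \ref{t2} forces $\phi$ to be harmonic; with $\phi$ harmonic, nontriviality of $(\phi,\psi)$ says exactly that $\phi$ is non-constant and $\psi\not\equiv 0$ (a constant $\phi$ gives a trivial pair of the second type, $\psi\equiv 0$ one of the first type). A non-constant harmonic map $S^2\to S^2$ has vanishing Hopf differential (a holomorphic quadratic differential on $S^2$), hence is holomorphic or anti-holomorphic (cf.\ \cite{s-y}); the anti-holomorphic case is handled identically after interchanging the $(1,0)$- and $(0,1)$-decompositions on $N$, so we assume $\phi$ holomorphic with $d:=\deg(\phi)\geq 1$. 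Since $\Dirac\psi=0$, each of $\pi_1^+(\psi),\pi_0^+(\psi),\pi_1^-(\psi),\pi_0^-(\psi)$ is a harmonic spinor along $\phi$; putting $g_M=g_N=0$ into Theorem \ref{t1} gives $\sum m_p^+=\sum n_p^-=-1-2d<0$, impossible unless $\pi_0^+(\psi)\equiv 0$ and $\pi_1^-(\psi)\equiv 0$. Writing $\psi=f\,\psi^+\otimes\frac{\partial}{\partial w}+g\,\psi^-\otimes\frac{\partial}{\partial\bar w}$ and using that $\phi$ holomorphic makes $\pd{w}{\bar z}=0$, equation (\ref{eq3}) degenerates to $\pd{}{\bar z}(\la^{1/4}f)=0$, and the corresponding equation for the $\Si^-M\otimes\phi^{-1}(T^{(0,1)}N)$-component (derived as in Section \ref{s1}) becomes $\pd{}{z}(\la^{1/4}g)=0$; thus $\la^{1/4}f$ and $\la^{1/4}\bar g=\overline{\la^{1/4}g}$ are holomorphic in $z$ wherever the local frames $\psi^\pm,\frac{\partial}{\partial w}$ are defined.

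The local frame $\frac{\partial}{\partial w}$ degenerates over the $d$ points of $\phi^{-1}(\infty_N)$, and $\psi^+$ transforms across $z=\infty$ by a unit-modulus factor that carries a power of $z$ (controlled by (\ref{metric})); tracking these transition functions shows that $\la^{1/4}f$ and $\la^{1/4}\bar g$ extend to rational functions on $\C^*=S^2$, with poles only over $\phi^{-1}(\infty_N)$. I would then set $u_1:=\f{\la^{1/4}\bar g}{2\phi_z}$ and $u_2:=-\f{\la^{1/4}f}{2\phi_z}$, which are meromorphic on $S^2$ because $\phi_z$ is rational, and put $\Psi:=\bar u_1\la^{1/4}\psi^++u_2\la^{1/4}\psi^-$. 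Since $\phi$ is holomorphic, $\phi_*(e_1)=\la^{-1/2}\big(\phi_z\frac{\partial}{\partial w}+\overline{\phi_z}\frac{\partial}{\partial\bar w}\big)$ and $\phi_*(e_2)=\sqrt{-1}\la^{-1/2}\big(\phi_z\frac{\partial}{\partial w}-\overline{\phi_z}\frac{\partial}{\partial\bar w}\big)$; substituting these together with the Clifford relations (\ref{eq2}) into $e_\a\cdot\Psi\otimes\phi_*(e_\a)$ and simplifying yields exactly $f\,\psi^+\otimes\frac{\partial}{\partial w}+g\,\psi^-\otimes\frac{\partial}{\partial\bar w}=\psi$. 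This proves $\psi=e_\a\cdot\Psi\otimes\phi_*(e_\a)$ with $\Psi$ of the form (\ref{twistor}).

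To check that $\Psi$ is a twistor spinor off the poles of $u_1,u_2$, write $\Psi=p\,\psi^++q\,\psi^-$ with $p=\bar u_1\la^{1/4}$, $q=u_2\la^{1/4}$, and compute $\dirac\Psi$, $\n_Z\Psi$, $\n_{\bar Z}\Psi$ in the $\psi^\pm$-frame using $\dirac\psi^+=-\pd{\la^{-1/2}}{\bar z}\psi^-$ and the formulas for $\dirac\psi^-$, $\n_Z\psi^\pm$, $\n_{\bar Z}\psi^\pm$ obtained in the same way from the connection coefficients. The identities $\pd{}{z}(\la^{-1/4}p)=\pd{}{z}\bar u_1=0$ and $\pd{}{\bar z}(\la^{-1/4}q)=\pd{}{\bar z}u_2=0$, valid off the poles, make the twistor operator $X\mapsto\n_X\Psi+\tfrac12 X\cdot\dirac\Psi$ vanish for $X=Z$ and $X=\bar Z$, hence for every $X$; so $\Psi$ is a twistor spinor with at worst isolated singularities at the poles of $u_1,u_2$. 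For the order statements: away from $\phi^{-1}(\infty_N)$ the numerators $\la^{1/4}f,\la^{1/4}\bar g$ are holomorphic, so a pole of order $k$ of $u_i$ at a finite $z_0$ can only arise from a zero of $\phi_z$ of order $\geq k$; since $\phi$ is conformal, $|d\phi|$ vanishes to the same order as $\phi_z$ at a finite point (and the points over $\infty_N$ are handled by the same count after passing to a chart of $N$ around $\infty_N$), so the order of $|d\phi|$ at $z_0$ is $\geq k$. At $z=\infty$ the extra power of $z$ in the transition of $\psi^+$ is precisely what weakens this to ``order $\geq k-1$'' when $k\geq 2$.

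For the converse, given a non-constant holomorphic (or anti-holomorphic) $\phi\colon S^2\to S^2$ and meromorphic $u_1,u_2$ satisfying the stated pole/branch-order compatibility, set $\Psi:=\bar u_1\la^{1/4}\psi^++u_2\la^{1/4}\psi^-$ and $\psi:=e_\a\cdot\Psi\otimes\phi_*(e_\a)$: the compatibility conditions are exactly what forces the poles of $u_1,u_2$ to be cancelled by the zeros of $\phi_*$, so $\psi$ is a smooth global spinor field along $\phi$; reversing the identity of the previous paragraph gives $\Dirac\psi=0$; and because $\psi$ has only $\pi_1^+$- and $\pi_0^-$-components, the curvature term on the right of (\ref{eq4}) vanishes exactly as in the proof of Theorem \ref{t2}, while $\tau(\phi)=0$ since $\phi$ is $\pm$holomorphic, so $(\phi,\psi)$ is Dirac-harmonic and nontrivial as soon as $(u_1,u_2)\not\equiv(0,0)$. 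The interior one-chart computations above follow routinely from Sections \ref{s2}--\ref{s3}; the main obstacle is the global bookkeeping — controlling the transition functions of $\Si^\pm M$ over $z=\infty$ and of $\phi^{-1}(T^{(1,0)}N)$ over $\phi^{-1}(\infty_N)$ — which is what turns ``holomorphic on a chart'' into ``meromorphic on $S^2$'' and yields the sharp pole-order versus branch-order relations, including the unit shift at $\infty$ forced by (\ref{metric}).
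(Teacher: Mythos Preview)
Your proof is correct and follows the same architecture as the paper's: invoke Theorem~\ref{t2} to make $\phi$ harmonic, hence $\pm$holomorphic; use Theorem~\ref{t1} with $g_M=g_N=0$ to kill $\pi_0^+(\psi)$ and $\pi_1^-(\psi)$; then express $\psi$ as $e_\a\cdot\Psi\otimes\phi_*(e_\a)$ with $\Psi$ a twistor spinor admitting the representation (\ref{twistor}).

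Two implementation choices differ. First, to obtain the meromorphic $u_1,u_2$, the paper cites \cite{c-j-l-w1}~Proposition~2.2 to get the twistor equation $\n_{\partial/\partial z}\Psi=0$ for $\Psi=g\psi^+-f\psi^-$, constructs the parallel frame $\Psi_0^\pm=\la^{1/4}\psi^\pm$, and reads off $\partial u_i/\partial\bar z=0$; you instead collapse (\ref{eq3}) (using $\partial w/\partial\bar z=0$) to $\partial(\la^{1/4}f)/\partial\bar z=0$, \emph{define} $u_1,u_2$ as quotients by $2\phi_z$, and then verify the twistor property by hand. These are two sides of the same computation; the paper's route is slightly shorter because it absorbs the Clifford/connection algebra into the cited proposition, while yours is more self-contained. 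Second, for the converse the paper shows only that $|\psi|$ is bounded and invokes the removable singularity theorem of \cite{c-j-l-w1}; you observe more directly that the pole/branch-order hypothesis makes $u_i\phi_z$ holomorphic, so $\psi$ is already globally smooth and the Euler--Lagrange system holds everywhere. Your argument here is more elementary and avoids the analytic black box, at the cost of a small amount of extra bookkeeping over $\phi^{-1}(\infty_N)$ and $z=\infty$.
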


\begin{proof}

By Theorem \ref{t2}, $\phi$ has to be a harmonic map. It is well known that when $\deg \phi=0$, $\phi$ is a constant mapping;
when $\deg \phi\geq 1$, $\phi$ is holomorphic; and when $\deg\phi\leq -1$, $\phi$ is anti-holomorphic (cf. \cite{s-y}pp. 11-12).
From Theorem \ref{t1}, when $\deg \phi=0$, $|\pi_1^+(\psi)|,|\pi_0^+(\psi)|, |\pi_1^-(\psi)|,|\pi_0^-(\psi)|$ are all identically zero,
hence $\psi=0$; it is a trivial solution of (\ref{eq1})-(\ref{eq4}). When $\deg \phi\geq 1$, we have $|\pi_0^+(\psi)|=|\pi_1^-(\psi)|=0$; since $\phi$ is holomorphic,
$\phi_*(e_1)-\sqrt{-1}\phi_*(e_2)\in T^{(1,0)}N$, $\phi_*(e_1)+\sqrt{-1}\phi_*(e_2)\in T^{(0,1)}N$, there exist two functions
$f, g$ (possibly with isolated singularities), such that
\begin{equation}
\psi=f\psi^+\otimes (\phi_*(e_1)-\sqrt{-1}\phi_*(e_2))+g\psi^-\otimes(\phi_*(e_1)+\sqrt{-1}\phi_*(e_2)).
\end{equation}
From (\ref{eq2}), it is easy to obtain
\begin{equation}\label{psi3}
\psi=e_\a\cdot \Psi\otimes \phi_*(e_\a);
\end{equation}
where $\Psi=g\psi^+-f\psi^-$.
When $\deg \phi \leq -1$, similarly we can construct a spinor $\Psi$ (possibly with isolated singularities) satisfying (\ref{psi3}).

By \cite{c-j-l-w1} Proposition 2.2, $\Psi$ is a twistor spinor, i.e.
\begin{equation}\label{eq5}
\n_v \Psi+\f{1}{2}v\cdot \dirac\Psi=0
\end{equation}
for any $v\in T_p S^2$, where $p$ is an arbitrary regular point of $\Psi$.
If $\Psi\in \G(\Si^+ M)$, then from (\ref{psi+}), (\ref{eq5}) is equivalent to
\begin{equation}
\n_{\pz}\Psi=0.
\end{equation}
Denote
\begin{equation}
\Psi_0^+=\la^{\f{1}{4}}\psi^+,
\end{equation}
then from
\begin{equation}
\aligned
\n_{\pz}\psi^+&=\f{1}{2}\lan \n_{\pz}e_1,e_2\ran e_1\cdot e_2\cdot \psi^+\\
&=-\f{1}{4}\sqrt{-1}\la^{\f{1}{2}}\lan \n_{e_1-\sqrt{-1}e_2}e_1,e_2\ran \psi^+\\
&=-\f{1}{4}\sqrt{-1}\la^{\f{1}{2}}\Big(\pd{(\la^{-\f{1}{2}})}{y}+\sqrt{-1}\pd{(\la^{-\f{1}{2}})}{x}\Big)\psi^+\\
&=-\f{1}{4}\pd{\log\la}{ z}\psi^+,
\endaligned
\end{equation}
we have
\begin{equation}
\n_{\pz}\Psi_0^+=\f{1}{4}\pd{\log \la}{z}\la^{\f{1}{4}}\psi^+-\f{1}{4}\pd{\log \la}{z}\la^{\f{1}{4}}\psi^+=0.
\end{equation}
Let $u_1$ be a function on $C^*$ (possibly with isolated singularities) such that
$$\Psi=\bar{u}_1\Phi_0^+,$$
then it is easy to obtain $\pd{u_1}{\bar{z}}=0$. Similarly, if $\Psi\in \G(\Si^- M)$ is a twistor spinor,
then we could obtain
$$\Psi=u_2\Phi_0^-,$$
where $u_2$ is a meromorphic function and $\Phi_0^-=\la^{\f{1}{4}}\psi^-$. Thereby (\ref{twistor}) follows.
The last statement is followed from $\la>0$ on $\C$ and (\ref{metric}).

On the other hand, let $\psi=e_\a\cdot \Psi\otimes \phi_*(e_\a)$, where
$\phi$ is holomorphic or anti-holomorphic, and $\Psi$ is a twistor spinor (possibly with isolated singularities) satisfying
(\ref{twistor}). Denote by $z_1,\cdots,z_l\in \C^*$ the singularities of $\Psi$,
 then it is easily to check that $(\phi,\psi)$ satisfies (\ref{eq1}) and (\ref{eq4}) on $\C^*-\{z_1,\cdots,z_l\}$, and by the assumption on the poles of $u_i$ and the zeros
 of $|d\phi|$,
$|\psi|$ is bounded on $\C^*-\{z_1,\cdots,z_l\}$. Hence the energy of $(\phi,\psi)$ on $\C^*$
$$E(\phi,\psi,\C^*)=\int_{\C^*} (|d\phi|^2+|\psi|^4)$$
is finite. The removable singularity theorem (see \cite{c-j-l-w1}) yields that $(\phi,\psi)$ is a Dirac-harmonic map
from $M$ to $N$.

\end{proof}

ACKNOWLEDGEMENT.\ The author wishes to express his sincere gratitude to Professor Y.L. Xin in Fudan University, for his inspiring suggestions.

\bibliographystyle{amsplain}

\end{document}